\renewcommand{\thefootnote}{\fnsymbol{footnote}}
\long\def\sfootnote[#1]#2{\begingroup
\def\thefootnote{\fnsymbol{footnote}}\footnote[#1]{#2}\endgroup}
\newtheorem{theorem}{Theorem}[section]
\newtheorem{definition}[theorem]{Definition}
\newtheorem{remark}[theorem]{Remark}
\newenvironment{proof}{\noindent\mbox{\bf Proof.}}
{\hfill\mbox{\ding{111}}\bigskip}
\begin{document}

\pagestyle{fancy}
\lhead[]{}
\chead[{\bf  Theorems of Tarski's Undefinability  and G\"odel's $2^{\rm nd}$ Incompleteness---Computationally}]{{\bf  Theorems of Tarski's Undefinability  and G\"odel's $2^{\rm nd}$ Incompleteness---Computationally}}
\rhead[]{}
\lfoot[{\tt page \thepage \ (of \pageref{LastPage})}]{}
\cfoot[]{}
\rfoot[]{{\tt page \thepage \ (of \pageref{LastPage})}}
\renewcommand{\headrulewidth}{1pt}
\renewcommand{\footrulewidth}{1pt}
\thispagestyle{empty}

\begin{table}
\begin{center}
\hspace{0.75em}
\begin{tabular}{|| c || l  | l ||}

\hline
 \multirow{6}{*}{\includegraphics[scale=0.175]{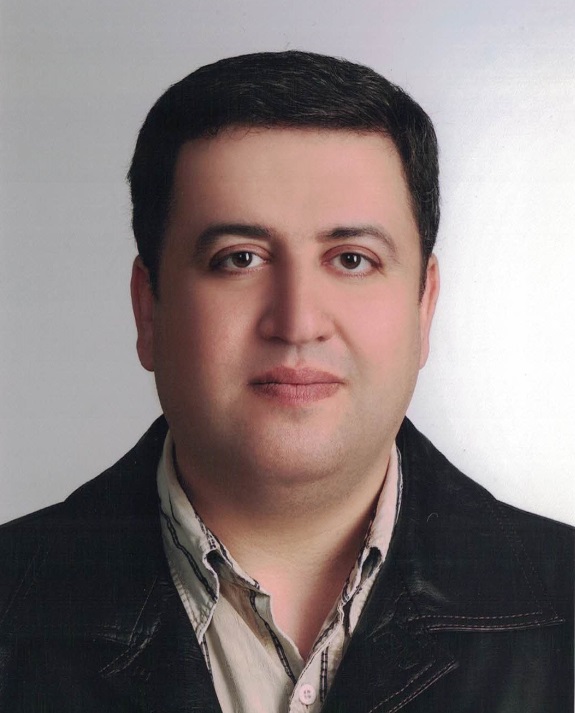}}&    &
 \multirow{7}{*}{ \ \ } \ \ \ \ \\
 &     \ \ {\large{\sc Saeed Salehi}}  \ \  \ & \ \    Tel: \, +98 (0)41 3339 3930      \\
  &   \ \ University of Tabriz \ \ \  & \ \ E-mail: \!\!{\tt /root}{\sf @}{\tt SaeedSalehi.ir/}     \\
 &  \ \ P.O.Box 51666--16471 \ \ \ &   \ \ \ \ {\tt /SalehiPour}{\sf @}{\tt TabrizU.ac.ir/}     \\
 &   \ \ Tabriz, IRAN \ \ \ & \ \ Web: \  \ {\tt http:\!/\!/SaeedSalehi.ir/}    \\
 &    &    \\
 \hline
\end{tabular}
\end{center}
\bigskip
\end{table}

\vspace{1.5em}

\begin{center}

\bigskip

{\bf {\Large  Theorems of Tarski's Undefinability   and \\[1ex] G\"odel's Second Incompleteness---Computationally
}}
\end{center}

\vspace{1.5em}

\begin{abstract}\noindent
  We present a version of G\"odel's Second Incompleteness Theorem for recursively enumerable consistent extensions of a fixed  axiomatizable theory, by incorporating some bi-theoretic version of the derivability conditions. We  also argue that Tarski's theorem on the Undefinability of Truth is G\"odel's  First Incompleteness Theorem relativized to definable oracles;  a unification of these two theorems is given.

\bigskip

\centerline{${\backsim\!\backsim\!\backsim\!\backsim\!\backsim\!\backsim\!\backsim\!
\backsim\!\backsim\!\backsim\!\backsim\!\backsim\!\backsim\!\backsim\!
\backsim\!\backsim\!\backsim\!\backsim\!\backsim\!\backsim\!\backsim\!
\backsim\!\backsim\!\backsim\!\backsim\!\backsim\!\backsim\!\backsim\!
\backsim\!\backsim\!\backsim\!\backsim\!\backsim\!\backsim\!\backsim\!
\backsim\!\backsim\!\backsim\!\backsim\!\backsim\!\backsim\!\backsim\!
\backsim\!\backsim\!\backsim\!\backsim\!\backsim\!\backsim\!\backsim\!
\backsim\!\backsim\!\backsim\!\backsim\!\backsim\!\backsim\!\backsim\!
\backsim\!\backsim\!\backsim\!\backsim\!\backsim\!\backsim\!\backsim\!
\backsim\!\backsim\!\backsim\!\backsim\!\backsim\!\backsim\!\backsim\!
\backsim\!\backsim\!\backsim}$}

\bigskip

\noindent {\bf 2010 Mathematics Subject Classification}:
 03F40 $\cdot$       03B25 $\cdot$ 03D35 $\cdot$ 03A05.

\noindent {\bf Keywords}:
Decidability, Definability $\cdot$ Incompleteness $\cdot$ Tarski's Undefinability Theorem $\cdot$ G\"odel's (first and second) Incompleteness Theorems.

\bigskip

\bigskip

\bigskip


\end{abstract}

\bigskip
\bigskip
\bigskip
\bigskip

\hspace{.75em} \fbox{\textsl{\footnotesize Date: 10 November 2019  (10.11.19)}}

\vfill

\bigskip
\noindent\underline{\centerline{}}
\centerline{\tt page 1 (of \pageref{LastPage})}


\newpage
\setcounter{page}{2}
\SetWatermarkAngle{65}
\SetWatermarkLightness{0.925}
\SetWatermarkScale{2.45}
\SetWatermarkText{\hspace{-1.25em}{\tt \copyright}\ {\sf 2019} {\sc Saeed Salehi}}



\section{Introduction}
In this paper we will argue that Tarski's theorem on the undefinability of {\sc Truth} in sufficiently expressive languages, which on its face value has nothing to do with (oracle) computations, is equivalent with G\"odel's (semantic form of the) first incompleteness theorem relativized to definable oracles.  Actually, we will show a theorem which unifies the theorems of G\"odel and Tarski. Then we will discuss G\"odel's Second Incompleteness Theorem. Since this  theorem,  inability of  sufficiently strong theories to prove (a statement of) their  own consistency, is not robust with respect to the notion of consistency, its proof is much more delicate and elegant than the proof of the first theorem; indeed the proof appears in very few places (see \cite{salehi-review}  a review  of the first edition of \cite{smith}). Though, some {\sc book} proofs (in the words of Paul Erd\H{o}s) for the first incompleteness theorem exist in the literature, a nice and neat proof (understandable to the undergraduates or amateur mathematicians) for the second theorem is missing. We will present a proof for this theorem from computational viewpoint which will be based on some bitheoretic derivability conditions.

\section{Unifying Theorems of G\"odel and Tarski}
Here,  we establish a relation  between G\"odel's First Incompleteness Theorem (in its weaker semantic form) and Tarski's Theorem on the Undefinability of Truth; indeed, we prove a theorem which unifies these two.
\subsection{Finitely Given Infinite Sets (of Natural Numbers)}
How can an infinite set (such as $\{0,3,6,9,\cdots,3k,\cdots\}$  or $\{0,1,4,9,\cdots,k^2,\cdots\}$) be finitely given? (We consider sets of natural numbers, i.e., subsets of $\mathbb{N}$, throughout the paper). There are a few definitions for this concept in the literature such as:

\begin{definition}{\rm

\noindent

\noindent

\begin{itemize}
\item A set $D(\subseteq\mathbb{N})$ is {\em decidable}, when there exists a
  single-input and Boolean-output algorithm which on any input $x(\in\mathbb{N})$ outputs {\tt Yes}, if $x\in D$, and   outputs {\tt No}, if $x\not\in D$.
\item A set $R(\subseteq\mathbb{N})$ is called {\em recursively enumerable}
  ({\sc re} for short), when there exists  an input-free algorithm which outputs
  (generates) the elements of $R$ (after running).
\item A set $S(\subseteq\mathbb{N})$ is {\em semi-decidable}, when there exists a single-input and output-free algorithm which after running on an input $x\in\mathbb{N}$ halts if and only if $x\in S$ (and so when $x\not\in S$ the algorithms runs forever on input $x$). \hfill\ding{71}
\end{itemize}
}\end{definition}
Two deep theorems of Computability Theory (see e.g. \cite{epstein}) state that

\bigskip

\qquad \qquad $\lozenge\!\!\!\blacklozenge$ {\em semi-decidability is equivalent to being an {\sc re} set}, and

\bigskip

\qquad \qquad  $\lozenge\!\!\!\blacklozenge$ {\em decidability is equivalent to recursively enumerability of a set and its complement}.

\bigskip

Fix a (sufficiently expressive) language of arithmetic, like the language $\{0,S,+,\times,\leqslant\}$ (as in  \cite{hajekpudlak}) or $\{0,1,+,\times,<\}$ (as in  \cite{kaye}), and denote by $\mathcal{N}$ the structure of $\mathbb{N}$ by this language, so that $\mathcal{N}\models\varphi$ makes the sense that the sentence $\varphi$ (in this language of arithmetic) holds true in the set of natural numbers (equipped with this language of arithmetic by their standard interpretations).

\bigskip

\begin{definition}{\rm

\noindent

\noindent

\begin{itemize}
\item A set $A(\subseteq\mathbb{N})$ is {\em definable}, when there exists a formula $\varphi(x)$, in the language of arithmetic,  such that $A=\{n\in\mathbb{N}\mid\mathcal{N}\models\varphi(\overline{n})\}$, where the term $\overline{n}$ represents $n$ in the language of arithmetic (which could be $S\cdots S(0)$ or $1+\cdots+1$ [for $n$-times]). \hfill\ding{71}
\end{itemize}
}\end{definition}

\begin{definition}{\rm
The classes of formulas $\{\Sigma_n\}_{n\in\mathbb{N}}$ and $\{\Pi_n\}_{n\in\mathbb{N}}$ are defined in the standard way \cite{hajekpudlak,kaye}: $\Sigma_0=\Pi_0$ is the class of bounded formulas (in which every universal quantifier has the form $\forall x ([x\leqslant t \rightarrow\cdots]$ and every existential quantifier has the form $\exists x [x\leqslant t\wedge\cdots]$ for some term $t$), and the class $\Sigma_{n+1}$ contains the closure of $\Pi_n$ under the existential quantifiers, and is closed under disjunction, conjunction, existential quantifiers and bounded universal quantifiers; similarly,  the class $\Pi_{n+1}$ contains the closure of $\Sigma_n$ under the universal quantifiers, and  is closed under disjunction, conjunction, universal quantifiers and bounded existential quantifiers. Let us also define  $\Delta_n=\Sigma_n\cap\Pi_n$ and  note that the negation of a $\Sigma_n$-formula is a $\Pi_n$-formula, and vice versa.
}\hfill\ding{71}\end{definition}
Another deep fact from Computability Theory (and Mathematical Logic) is that

\bigskip

\qquad \qquad $\lozenge\!\!\!\blacklozenge$  {\em {\sc re} sets are exactly the sets definable by $\Sigma_1$-formulas}.

\bigskip

And so, co-{\sc re} sets are the ones definable by $\Pi_1$-formulas, and then the decidable sets coincide with $\Delta_1$-definable sets. So, one can say that in a sense
$$\textsf{Computability is Definability.}$$
And conversely, {\sf definability is (relativized) computability (by oracles)}: since having an oracle for deciding the arithmetical formula $\varphi(x)$ we can decide whether given $n$ belongs to the set defined by $\varphi(x)$ or not (if $\varphi(\overline{n})$ holds then it does belong to the set and if $\varphi(\overline{n})$ does not hold then $n$ does not belong to the set).

Thus, by a finitely given (infinite) set we may mean  a definable set, the complexity of whose definition describes the complexity of the computation of the membership algorithm in that set.
\subsection{Computability vs. Definability}
G\"odel's First Incompleteness theorem in its weaker semantic form states that the set of all true arithmetical sentences ${\rm Th}(\mathcal{N})$ (see the definition below)  is not decidable. It immediately follows that this set is not {\sc re}, since otherwise its complement, noting that  ${\rm Th}(\mathcal{N})^\complement=\{\neg\theta\mid\theta\!\in\!{\rm Th}(\mathcal{N})\}$, would be {\sc re} too, and thus ${\rm Th}(\mathcal{N})$ would be {\sc re} and co-{\sc re} and whence decidable!
Hence, in the semantic form G\"odel's first incompleteness theorems states that ${\rm Th}(\mathcal{N})\not\in\Sigma_1$ (by abusing the notation we may denote by $\Sigma_n$ the class of $\Sigma_n$-formulas and also the class of definable sets by some $\Sigma_n$-formulas). Syntactically, this theorem of G\"odel is usually stated as ``{\em no sound and {\sc re} extension of Peano's Arithmetic can be complete}''; in notation  (where ${\tt PA}$ denotes Peano's Arithmetic)

 ${\tt PA}\subseteq T\;\&\;T\!\in\!\Sigma_1\;\&\;T\subseteq{\rm Th}(\mathcal{N})\Longrightarrow T\neq{\rm Th}(\mathcal{N})$.
\begin{definition}{\rm
The  set of all true arithmetical sentences is

${\rm Th}(\mathcal{N})=\{\theta\in\textrm{Arithmeical Sentences}\mid\mathcal{N}\models\theta\}$.

\noindent For any $n$, the  set of all true arithmetical $\Sigma_n$-sentences is

$\Sigma_n\text{-}{\rm Th}(\mathcal{N})=\{\theta\!\in\!\Sigma_n\text{-}{\rm Sentences}\mid\mathcal{N}\models\theta\}$.

\noindent  Also, the  set of all true arithmetical $\Pi_n$-sentences is

$\Pi_n\text{-}{\rm Th}(\mathcal{N})=\{\theta\!\in\!\Pi_n\text{-}{\rm Sentences}\mid\mathcal{N}\models\theta\}$.
}\hfill\ding{71}\end{definition}
\noindent Still, a more precise reading of G\"odel's first incompleteness theorem is:

\bigskip

$(\star)\qquad {\tt PA}\subseteq T \; \& \; T\in\Sigma_1 \; \& \; T\subseteq{\rm Th}(\mathcal{N}) \Longrightarrow \Pi_1\text{-}{\rm Th}(\mathcal{N})\not\subseteq T$

\bigskip

\noindent since G\"odel's true but unprovable (in $T$) sentence is indeed $\Pi_1$.

Tarski's theorem on the undefinability of truth states that ${\rm Th}(\mathcal{N})$ is not definable. For this theorem to make sense we should view ${\rm Th}(\mathcal{N})$ as a set of natural numbers, and this is done by a fixed (standard) G\"odel numbering of syntax. Let $\ulcorner\alpha\urcorner$ denote the G\"odel number of the object $\alpha$. So, (again by abusing the notation) identifying the set of natural numbers $$\{\ulcorner\theta\urcorner\mid \theta\in\textrm{Arithmetical Sentences}\;\&\; \mathcal{N}\models\theta\}$$ with ${\rm Th}(\mathcal{N})$, we can talk about definability or undefinability of ${\rm Th}(\mathcal{N})$. So, Tarski's theorem states that for any $n$, ${\rm Th}(\mathcal{N})\not\in\Sigma_n$. For the sake of unifying it with G\"odel's theorem let us present this theorem as

\bigskip

$(\ast)_n\qquad {\tt PA}\subseteq T \; \& \; T\in\Sigma_n \; \& \; T\subseteq{\rm Th}(\mathcal{N}) \Longrightarrow {\rm Th}(\mathcal{N})\not\subseteq T$

\bigskip

\noindent stating that \  ``no definable and sound extension of  ${\tt PA}$ can be complete''. Compare  with   G\"odel's theorem  stated above:\! ``no $\Sigma_1$-definable and sound extension of  ${\tt PA}$ can be $\Pi_1$-complete''.

\begin{theorem}[\cite{salehi-seraji}]\label{thm-first}
No  $\Sigma_n$-definable and sound   extension of  ${\tt PA}$ is $\Pi_n$-complete  (for any $n\!>\!0$).

$(\divideontimes)_n\qquad {\tt PA}\subseteq T \; \& \; T\in\Sigma_n \; \& \; T\subseteq{\rm Th}(\mathcal{N}) \Longrightarrow \Pi_n\text{-}{\rm Th}(\mathcal{N})\not\subseteq T$
\end{theorem}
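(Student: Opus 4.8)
The plan is to prove the contrapositive in disguise: assuming $\mathtt{PA}\subseteq T$, $T\in\Sigma_n$ and $T\subseteq{\rm Th}(\mathcal{N})$, I want to exhibit a true $\Pi_n$-sentence not provable in $T$. The natural strategy is a diagonalization carried out at the right level of the arithmetical hierarchy. First I would recall the two inputs available from the earlier discussion: (i) the provability predicate of $T$, i.e.\ the set $\{\ulcorner\varphi\urcorner : T\vdash\varphi\}$, is $\Sigma_1$ in a $\Sigma_n$-definition of $T$, hence $\Sigma_n$ overall (since $n>0$, a $\Sigma_1$ predicate relative to a $\Sigma_n$ oracle stays $\Sigma_n$); and (ii) soundness $T\subseteq{\rm Th}(\mathcal{N})$ means everything $T$ proves is true in $\mathcal{N}$, and in particular $T$ is consistent. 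The key quantitative fact I will lean on is the standard hierarchy theorem: $\Pi_n\text{-}{\rm Th}(\mathcal{N})$ is a genuinely $\Pi_n$ set, not $\Sigma_n$ — equivalently, $\Pi_n\text{-}{\rm Th}(\mathcal{N})$ is not definable by any $\Sigma_n$-formula. This is the semantic engine; everything else is bookkeeping.

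Next I would assume for contradiction that $\Pi_n\text{-}{\rm Th}(\mathcal{N})\subseteq T$. Combined with soundness, this gives $\Pi_n\text{-}{\rm Th}(\mathcal{N}) = T\cap\{\Pi_n\text{-sentences}\}$ — the $\Pi_n$-fragment of $T$ is exactly the set of true $\Pi_n$-sentences. But the right-hand side is $\Sigma_n$-definable: intersecting the $\Sigma_n$ set $T$ (here using that the provable-$\Pi_n$-sentences form a $\Sigma_n$ set, being $T$ restricted by a decidable syntactic condition) with the decidable set of $\Pi_n$-sentence codes keeps us inside $\Sigma_n$. So $\Pi_n\text{-}{\rm Th}(\mathcal{N})$ would be $\Sigma_n$, contradicting the hierarchy theorem. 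Hence $\Pi_n\text{-}{\rm Th}(\mathcal{N})\not\subseteq T$, which is exactly $(\divideontimes)_n$. For the $n=1$ instance this reproves G\"odel's theorem in form $(\star)$; letting $n$ range and then noting that if ${\rm Th}(\mathcal{N})\subseteq T$ then in particular $\Pi_n\text{-}{\rm Th}(\mathcal{N})\subseteq T$, one also recovers $(\ast)_n$ and hence Tarski — so the single statement does the promised unification.

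The step I expect to be the main obstacle is the verification that the provability predicate of $T$ really is $\Sigma_n$ when $T$ itself is only given as a $\Sigma_n$ set rather than a recursively enumerable one — i.e.\ that "provable-from-$T$" is $\Sigma_1$ \emph{relative to} the $\Sigma_n$ oracle defining $T$'s axioms, and that this relativized $\Sigma_1^{\Sigma_n}$ collapses into $\Sigma_n$. For $n=1$ this is the familiar fact that $\Sigma_1$ is closed under "$\exists$ finite sequence of axioms"; for general $n$ it requires knowing that $\Sigma_n$ is closed under bounded quantification, conjunction, disjunction, and unbounded existential quantification, and that an existential quantifier over a $\Sigma_n$ matrix (the conjunction of "this is a $T$-proof" with "each listed axiom satisfies the $\Sigma_n$ axiom-condition") stays $\Sigma_n$ — all of which follows from the closure properties recorded in the Definition of the hierarchy. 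A secondary, purely technical point is to make sure the "$\Pi_n$-fragment of $T$" is being described by a formula in the correct class: a $\Pi_n$-sentence $\theta$ is $T$-provable iff $\exists p\,(p\text{ codes a }T\text{-proof of }\theta)$, and since $n\geqslant 1$ this existential-over-$\Sigma_n$ is again $\Sigma_n$. Once these closure bookkeeping items are nailed down, the contradiction with the hierarchy theorem is immediate and the proof closes.
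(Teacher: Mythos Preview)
Your argument is correct, but it takes a different route from the paper's. The paper proceeds constructively in the G\"odel style: since $T\in\Sigma_n$, the provability predicate ${\sf Pr}_T$ is $\Sigma_n$; by the Diagonal Lemma one obtains a $\Pi_n$-sentence $\boldsymbol\gamma$ with ${\tt PA}\vdash\boldsymbol\gamma\leftrightarrow\neg{\sf Pr}_T(\ulcorner\boldsymbol\gamma\urcorner)$, and then shows directly (using soundness) that $\mathcal{N}\models\boldsymbol\gamma$ while $T\not\vdash\boldsymbol\gamma$. Your proof instead reduces the statement to the arithmetical hierarchy theorem: if $\Pi_n\text{-}{\rm Th}(\mathcal{N})\subseteq T$ then, combining with soundness, $\Pi_n\text{-}{\rm Th}(\mathcal{N})$ equals the $\Sigma_n$-definable set $\{\theta\in\Pi_n\text{-Sent.}\mid T\vdash\theta\}$, contradicting $\Pi_n\text{-}{\rm Th}(\mathcal{N})\notin\Sigma_n$. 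What the paper's route buys is an explicit witness (the independent $\Pi_n$-sentence is actually exhibited), and self-containment---no external hierarchy result is invoked. What your route buys is brevity and a clean structural picture: the theorem becomes an immediate corollary of the non-collapse of the hierarchy at level $n$. Note, however, that the hierarchy theorem itself is proved by exactly the same diagonalization, so your argument is not more elementary---the Diagonal Lemma has merely been pushed into the black box you cite. Your careful discussion of why ${\sf Pr}_T\in\Sigma_n$ when the axiom set is $\Sigma_n$ (closure of $\Sigma_n$ under $\exists$, $\wedge$, $\vee$, and bounded quantifiers) is exactly the point the paper also uses, and is handled correctly.
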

\begin{proof}
If $T$ is $\Sigma_n$-definable, then so is its provability predicate ${\sf Pr}_T(x)$. By (G\"odel-Carnap's) Diagonal Lemma there exists an arithmetical sentence $\boldsymbol\gamma$ such that the equivalence ${\tt PA}\vdash \boldsymbol\gamma\longleftrightarrow\neg{\sf Pr}_T(\ulcorner\boldsymbol\gamma\urcorner)$ holds. This sentence $\boldsymbol\gamma$ is equivalently a $\Pi_n$-sentence (and even can be explicitly constructed to be so). Now, we show that $\mathcal{N}\models\boldsymbol\gamma$. Since, otherwise (if $\mathcal{N}\models\neg\boldsymbol\gamma$, then) $\mathcal{N}\models{\sf Pr}_T(\ulcorner\boldsymbol\gamma\urcorner)$ and so $T\vdash\boldsymbol\gamma$, but this contradicts the soundness of $T$. So, $\boldsymbol\gamma\in\Pi_n\text{-}{\rm Th}(\mathcal{N})$. Finally, we show that $T\not\vdash\boldsymbol\gamma$. Because, if $T\vdash\boldsymbol\gamma$, then (by ${\tt PA}\subseteq T$) $T\vdash\neg{\sf Pr}_T(\ulcorner\boldsymbol\gamma\urcorner)$ and so (by soundness of $T$) $\mathcal{N}\models\neg{\sf Pr}_T(\ulcorner\boldsymbol\gamma\urcorner)$; whence $T\not\vdash\boldsymbol\gamma$, a contradiction.
\end{proof}

\begin{remark}{\rm
Obviously, $(\divideontimes)_1$ is the same as $(\star)$, and also  $(\divideontimes)_n$  implies $(\ast)_n$ for every $n>0$. Thus, Theorem~\ref{thm-first} implies G\"odel's First Incompleteness Theorem (for~$n=1$) and also Tarski's Theorem on the Undefinability of Truth.
}\hfill\ding{71}\end{remark}

\section{G\"odel's Second Incompleteness Theorem}
G\"odel's Second Incompleteness Theorem states that sufficiently strong theories (in sufficiently expressive languages)  cannot prove their own consistency. To make it more precise, it should read as: for a sufficiently strong theory in a sufficiently expressive language there exists a sentence which expresses the consistency of the theory (in a way or another) which is not provable from the theory.

A classical proof of this theorem (which is not its only proof) goes roughly as:

\noindent \underline{\; 1 \;} First, the consistency statement ${\sf Con}_T$ of a theory $T$ comes from a provability predicate ${\sf Pr}_T$ of that theory by the definition $\neg{\sf Pr}_T(\ulcorner\bot\urcorner)$, where $\bot$ is a  contradictory statement such as $t\neq t$ for a term $t$ in the language of $T$.

\noindent \underline{\; 2 \;} Second, this provability predicate should satisfy some conditions, the most famous of which are  the following which are known as Hilbert-Bernays-L\"ob provability (or derivability) conditions: for any  $\varphi,\psi$,

\begin{tabular}{ll}
(i) & $T\vdash{\sf Pr}_T(\ulcorner\varphi\urcorner)$, if $T\vdash\varphi$; \\
(ii) & $T\vdash{\sf Pr}_T(\ulcorner\varphi\rightarrow\psi\urcorner)\rightarrow\big[{\sf Pr}_T(\ulcorner\varphi\urcorner)\rightarrow{\sf Pr}_T(\ulcorner\psi\urcorner)\big]$; \\
(iii) & $T\vdash{\sf Pr}_T(\ulcorner\varphi\urcorner)\rightarrow {\sf Pr}_T(\ulcorner{\sf Pr}_T(\ulcorner\varphi\urcorner)\urcorner)$.
\end{tabular}

\noindent
These conditions translate nicely to the language of modal logic when $\square$ is interpreted as provability:

 (i) is the same as the necessitation rule $\varphi/\square\varphi$,

 (ii) is the same as the Kripke's distribution axiom $\square(\varphi\rightarrow\psi)\rightarrow(\square\varphi\rightarrow\square\psi)$,

 (iii) is the same as what is called the $4$ axiom in modal logic $\square\varphi\rightarrow\square\square\varphi$.

\noindent \underline{\; 3 \;} Third, finally, the classical proof uses  Diagonal Lemma, just like the proof of G\"odel's First Incompleteness Theorem, for the formula ${\sf Pr}_T(\ulcorner\xi\urcorner)\rightarrow\bot$ to get a formula $\mathscr{G}$  which satisfies the following provable equivalence

$(\textswab{d}) \qquad T\vdash \mathscr{G} \longleftrightarrow \big[{\sf Pr}_T(\ulcorner \mathscr{G}\urcorner)\rightarrow\bot\big]$

\noindent \underline{\; \; \;} Then, for the sake of a contradiction, assuming that

$(0) \qquad T\vdash {\sf Con}(T)$

\noindent the proof continues as follows (note that for inferring the items (1) and (5) we use the tautologies $(\neg A) \equiv (A\rightarrow\bot)$ and   $A\rightarrow (B\rightarrow C) \equiv (A\rightarrow B)\rightarrow(A\rightarrow C)$, respectively):

\begin{tabular}{lll}
(1) & $T\vdash {\sf Pr}_T(\ulcorner\bot\urcorner)\rightarrow\bot$ & by $(0)$\\
(2) & $T\vdash \mathscr{G}\rightarrow \big[{\sf Pr}_T(\ulcorner\mathscr{G}\urcorner)\rightarrow\bot\big]$ & by $(\textswab{d})$ \\
(3) & $T\vdash {\sf Pr}_T\big(\ulcorner\mathscr{G}\rightarrow \big[{\sf Pr}_T(\ulcorner\mathscr{G}\urcorner)\rightarrow\bot\big]\urcorner\big)$  & by (2),(i)\\
(4) & $T\vdash {\sf Pr}_T(\ulcorner\mathscr{G}\urcorner) \rightarrow \big[{\sf Pr}_T\big(\ulcorner{\sf Pr}_T(\ulcorner\mathscr{G}\urcorner)\urcorner\big)\rightarrow {\sf Pr}_T(\ulcorner\bot\urcorner)\big]$ & by (3),(ii)\\
(5) & $T\vdash {\sf Pr}_T(\ulcorner\mathscr{G}\urcorner) \rightarrow  {\sf Pr}_T(\ulcorner\bot\urcorner)$ & by (4),(iii)\\
(6) & $T\vdash {\sf Pr}_T(\ulcorner\mathscr{G}\urcorner) \rightarrow  \bot$ & by (5),(1)\\
(7) & $T\vdash \mathscr{G}$ & by (6),$(\textswab{d})$\\
(8) & $T\vdash {\sf Pr}_T(\ulcorner\mathscr{G}\urcorner)$ & by (7),(i)\\
(9) & $T\vdash\bot$ & by (6),(8)
\end{tabular}

\noindent So, if $T$ is consistent (and satisfies the provability conditions), then $T\not\vdash{\sf Con}(T)$.

As a matter of fact, the above proof proves much more than G\"odel's second incompleteness theorem. If $\bot$ is replaced with $\varphi$ in (1)--(10), then L\"ob's rule is derived: if $T\vdash{\tt Pr}_T(\ulcorner\varphi\urcorner)\rightarrow\varphi$,  then $T\vdash\varphi$.
Almost
the same line  of reasoning  can show L\"ob's Axiom: $$T\vdash {\sf Pr}_T\big(\ulcorner{\sf Pr}_T(\ulcorner\varphi\urcorner)\rightarrow\varphi\urcorner\big)\rightarrow{\sf Pr}_T(\ulcorner\varphi\urcorner)$$ which immediately (by contraposition) implies the formalized form of G\"odel's Second Incompleteness Theorem: $$T\vdash {\sf Con}(T\!\cup\!\{\neg\varphi\})\rightarrow\neg{\sf Pr}_{T\cup\{\neg\varphi\}}\big(\ulcorner{\sf Con}(T\!\cup\!\{\neg\varphi\})\urcorner\big).$$ In particular for $\varphi=\bot$ we get $T\vdash {\sf Con}(T)\rightarrow\neg{\sf Pr}_{T}\big(\ulcorner{\sf Con}(T)\urcorner\big)$, which is exactly what G\"odel's second incompleteness theorem states: if a theory is consistent  (and satisfies some conditions), then it cannot prove its own consistency. To emphasize the importance of this generalization and showing the strength of this classical proof we present a proof for L\"ob's axiom below: for a given sentence $\varphi$, by Diagonal Lemma, there exists a sentence $\mathscr{G}$ such that

\quad $(\textswab{d}) \quad T\vdash \mathscr{G} \longleftrightarrow \big[{\sf Pr}_T(\ulcorner \mathscr{G}\urcorner)\rightarrow\varphi\big]$.

\noindent Now, we reason for the theory $\widehat{T}=T+{\sf Pr}_T\big(\ulcorner{\sf Pr}_T(\ulcorner\varphi\urcorner)\rightarrow\varphi\urcorner\big)$ as follows

\begin{tabular}{lll}
(1) & $T\vdash {\sf Pr}_T\big(\ulcorner\mathscr{G}\rightarrow \big[{\sf Pr}_T(\ulcorner\mathscr{G}\urcorner)\rightarrow\varphi\big]\urcorner\big)$ & by $(\textswab{d})$,(i) \\
(2) & $T\vdash {\sf Pr}_T(\ulcorner\mathscr{G}\urcorner) \rightarrow \big[{\sf Pr}_T\big(\ulcorner{\sf Pr}_T(\ulcorner\mathscr{G}\urcorner)\urcorner\big)\rightarrow {\sf Pr}_T(\ulcorner\varphi\urcorner)\big]$ & by (1),(ii) \\
(3) & $T\vdash {\sf Pr}_T(\ulcorner\mathscr{G}\urcorner) \rightarrow  {\sf Pr}_T(\ulcorner\varphi\urcorner)$ & by (3),(iii) \\
(4) & $T\vdash {\sf Pr}_T\big(\ulcorner{\sf Pr}_T(\ulcorner\mathscr{G}\urcorner) \rightarrow  {\sf Pr}_T(\ulcorner\varphi\urcorner)\urcorner\big)$ & by (3),(i) \\
(5) & $\widehat{T}\vdash {\sf Pr}_T\big(\ulcorner{\sf Pr}_T(\ulcorner\mathscr{G}\urcorner) \rightarrow  \varphi\urcorner\big)$ & by (4)$\&$hyp. \\
(6) & $T\vdash{\sf Pr}_T\big(\ulcorner  \big[{\sf Pr}_T(\ulcorner\mathscr{G}\urcorner)\rightarrow\varphi\big] \rightarrow \mathscr{G}\urcorner\big)$ & by $(\textswab{d})$,(i)\\
(7) & $T\vdash{\sf Pr}_T\big(\ulcorner  {\sf Pr}_T(\ulcorner \mathscr{G}\urcorner)\rightarrow\varphi\urcorner\big) \rightarrow {\sf Pr}_T(\ulcorner \mathscr{G}\urcorner)$ & by (6),(ii)\\
(8) & $\widehat{T}\vdash {\sf Pr}_T(\ulcorner \mathscr{G}\urcorner)$ & by (5),(7) \\
(9) & $\widehat{T}\vdash {\sf Pr}_T(\ulcorner\varphi\urcorner)$ & by (3),(8)
\end{tabular}

\noindent Let us note that  (5) follows from (4) and the hypothesis ${\sf Pr}_T\big(\ulcorner{\sf Pr}_T(\ulcorner\varphi\urcorner)\rightarrow\varphi\urcorner\big)$ with the following formula which holds by (i) and (ii)
$${\sf Pr}_T(\ulcorner\mathcal{A} \rightarrow  \mathcal{B}\urcorner)\longrightarrow\big[{\sf Pr}_T(\ulcorner\mathcal{B}\rightarrow\mathcal{C}\urcorner)\longrightarrow{\sf Pr}_T(\ulcorner\mathcal{A} \rightarrow \mathcal{C}\urcorner)\big],$$
by putting $\mathcal{A}={\sf Pr}_T(\ulcorner G\urcorner)$, $\mathcal{B}={\sf Pr}_T(\ulcorner\varphi\urcorner)$, $\mathcal{C}=\varphi$.

So, this argument which has become classical in modern textbooks (see \cite{smorynski-modal} for a historical account of Hilbert-Bernays-L\"ob provability conditions) is too strong; it can indeed prove a formalized version of G\"odel's second theorem and even more. Another dilemma with this proof is that it appears in very few places, since most of the authors know the proof through the provability conditions (i), (ii) and (iii). Though (i) and (ii) can be proved rather easily, the proof of (iii) is rather rare   (see~\cite{salehi-review}). Indeed, for (i) one needs to know/show that
\begin{itemize}
\item for any {\sc re} theory $T$, the formula ${\sf Pr}_T$ is $\Sigma_1$;
\item most natural theories, like ${\tt PA}$, are $\Sigma_1$-complete;
\item so if $T$ is an {\sc re} theory containing ${\tt PA}$, then (i) holds for ${\sf Pr}_T$.
\end{itemize}
For (ii) it suffices to note that if the formula ${\sf Proof}_T(x,y)$ represents the statement ``$y$ is the G\"odel number of a proof in $T$ of the formula with G\"odel number $x$'' (so ${\sf Pr}_T(x)\equiv\exists y\,{\sf Proof}_T(x,y)$ by definition), then (ii) is equivalent to  $${\sf Proof}_T(\ulcorner\varphi\rightarrow\psi\urcorner,u)\longrightarrow\big[{\sf Proof}_T(\ulcorner\varphi\urcorner,v)\longrightarrow\exists w{\sf Proof}_T(\ulcorner\psi\urcorner,w)\big].$$
Having $u$ and $v$, it suffices to take $w$ as $u^\frown v^\frown\ulcorner\psi\urcorner$, where $^\frown$ denotes concatenation (of strings).  So, if
\begin{itemize}
\item $T$ can prove the totality of  concatenation, i.e., $T\vdash   \forall u,v\exists w (u^\frown v=w)$,
\end{itemize}
    then (ii) holds for $T$.

\noindent But, as mentioned before, the proof of (iii) is rather technical  and so appears in many few places. One reason is that (iii) cannot be (easily) proved directly; indeed, its proof goes through proving the formalized $\Sigma_1$-completeness for the theory $T$:

\begin{tabular}{ll}
(iv) & $T\vdash\sigma\rightarrow{\sf Pr}_T(\ulcorner\sigma\urcorner)$ for any $\Sigma_1$-formula $\sigma$.
\end{tabular}

\noindent It is interesting to note that the third provability condition sometimes is taken to be (iv), rather than (iii) which is a special case of (iv). All the existing proofs of (iii) indeed prove (iv).
It is actually difficult to prove $T\vdash\sigma\rightarrow{\sf Pr}_T(\ulcorner\sigma\urcorner)$, for any $\Sigma_1$-formula $\sigma$,  for particular $T$'s like ${\tt PA}$.  Let us note that (i),(ii), (iii) and (iv) involve a kind of self-reference: the theory $T$ can prove some statements about its own provability predicate. The fundamental question here is that: \textsl{does every proof of G\"odel's Second Incompleteness Theorem have to go through proving} (iii) {\sl or} (iv)?
 Fortunately, the answer is no! and some beautiful proofs of this theorem can be found in e.g. \cite{adamow,jech,kotlarski94,kotlarski98} some of which even avoid the use of  Diagonal Lemma (cf. also \cite{salehi2014} for a diagonal-free proof of G\"odel-Rosser's theorem).

\subsection{Bi-Theoretic Derivability Conditions}
``For certain purposes'' in particular  ``for what is perhaps the most important philosophical application of'' G\"odel's Second Incompleteness Theorem ``namely, that to Hilbert's Program'', Detlefsen \cite{detlefsen} introduced a bitheoretic version of this theorem; a version ``in which it is allowed that the representing and represented theories be different.''
Our remark above about the circularity of the derivability conditions (i,ii,iii,iv) was based on the fact that a single theory does all the job: prove some facts about its own provability. For example in (i) we have that $T\vdash{\sf Pr}_T(\ulcorner\varphi\urcorner)$, whenever $T\vdash\varphi$. But the fact of the matter is that if $T\vdash\varphi$, then we also have that
${\tt PA}\vdash{\sf Pr}_T(\ulcorner\varphi\urcorner)$ (and so when ${\tt PA}\subseteq T$ we can conclude that $T\vdash{\sf Pr}_T(\ulcorner\varphi\urcorner)$). It seems to us that the new bitheoretic condition

$T\vdash\varphi \Longrightarrow {\tt PA}\vdash{\sf Pr}_T(\ulcorner\varphi\urcorner)$

\noindent is somehow stronger than the monotheoretic condition

$T\vdash\varphi \Longrightarrow T\vdash{\sf Pr}_T(\ulcorner\varphi\urcorner)$

\noindent even if we assume that ${\tt PA}\subseteq T$. One reason is that in the monotheoretic version the theory $T$ (itself) should be $\Sigma_1$-complete (be able to prove all true arithmetical $\Sigma_1$-sentences) but in the bitheoretic version the $\Sigma_1$-completeness of a fixed theory (like ${\tt PA}$ or even its weak fragments) suffices. If that was sufficiently interesting, let us now have a look at (ii):  ${\sf Pr}_T(\ulcorner\varphi\rightarrow\psi\urcorner)\rightarrow\big[{\sf Pr}_T(\ulcorner\varphi\urcorner)\rightarrow{\sf Pr}_T(\ulcorner\psi\urcorner)\big]$ is true for any sentences $\varphi,\psi$ and any classical theory $T$. So, it must be provable in a sufficiently strong arithmetical theory (like ${\tt PA}$); whence we may have

${\tt PA}\vdash{\sf Pr}_T(\ulcorner\varphi\rightarrow\psi\urcorner)\rightarrow\big[{\sf Pr}_T(\ulcorner\varphi\urcorner)\rightarrow{\sf Pr}_T(\ulcorner\psi\urcorner)\big]$

\noindent for any theory $T$. Let us note that here we do not require (and do not need) the theory $T$ to contain ${\tt PA}$. Its strength over (ii) is more obvious.

Other than the above mathematical interests in the bitheoretic versions of the derivability conditions, Detlefsent \cite{detlefsen} sees two important philosophical reasons for the importance of the bitheoretic versions: ``The first is that it points up an element of unclarity in the usual `monotheoretic' formulations of'' G\"odel's Second Incompleteness Theorem.  ``In such formulations, some of the references to $T$ are references to it in its capacity {\em as representing theory} while others are references to it in its capacity {\em as represented theory}. The {\em justification} of the Derivability Conditions requires a clear demarcation of these roles. A justifiable constraint on the representing theory of a representational scheme can not generally be expected to be a justifiable constraint on the represented theory of that scheme, and {\em vice versa}. The justification of representational constraints therefore generally requires a distinction between the representing and represented theories of a representational scheme.'' (The emphasizes are Detlefsen's \cite{detlefsen}). ``The second reason the representing vs. represented theory distinction is important for our purposes is that ... certain applications ... require that we allow the two to be different. The particular application we have in mind is the application of [G\"odel's Second Incompleteness Theorem] to the evaluation of Hilbert's Program. It requires that we allow the representing theory to become as weak as (some codification of) finitary reasoning while, at the same time, allowing the represented theory to be as strong as the strongest classical theory that possesses the type of instrumental virtues for which Hilbert generally prized classical mathematics (e.g., various systems of set theory). If the [G\"odel's Second Incompleteness Theorem] phenomenon were to hold only for some environments containing finitary reasoning, and not for all of them, it would not be legitimate to take it as refuting Hilbert's Program because it would not then be an invariant feature of all (proper) representational environments. Justifications of the [Derivability Conditions] must therefor be valid not only in the monotheoretic settings but also in the appropriate bitheoretic settings.''

Let us now list the bitheoretic derivability conditions of \cite{detlefsen} for two theories $S$ (which is intended to be as weak as possible--representing finitary mathematics) and $T$ (which is intended to be as strong as possible--representing ideal mathematics):

\bigskip

\begin{tabular}{ll}
({\bf B}i) & $S\vdash{\sf Pr}_T(\ulcorner\varphi\urcorner)$ whenever $T\vdash\varphi$; \\
({\bf B}ii) & $S\vdash{\sf Pr}_T(\ulcorner\varphi\rightarrow\psi\urcorner)\rightarrow\big[{\sf Pr}_T(\ulcorner\varphi\urcorner)\rightarrow{\sf Pr}_T(\ulcorner\psi\urcorner)\big]$; \\
({\bf B}iii) & $S\vdash{\sf Pr}_T(\ulcorner\varphi\urcorner)\rightarrow {\sf Pr}_S(\ulcorner{\sf Pr}_T(\ulcorner\varphi\urcorner)\urcorner)$; \\
({\bf B}iv) & $S\vdash{\sf Pr}_S(\ulcorner\varphi\urcorner)\rightarrow {\sf Pr}_T(\ulcorner\varphi\urcorner)$; \\
({\bf B}v) & $S\vdash\mathscr{G} \longleftrightarrow \neg{\sf Pr}_T(\ulcorner \mathscr{G}\urcorner)$ for some sentence $\mathscr{G}$.
\end{tabular}

\bigskip

\noindent Then Detlefsen's Bi-G2 Lemma (\cite{detlefsen}, p.~48) proves that $S\vdash {\sf Con}(T)\longrightarrow\mathscr{G}$, which then implies G\"odel's Second Incompleteness Theorem by classical reasoning: for $S\subseteq T$ we have $S\not\vdash\mathscr{G}$  (by G\"odel's first incompleteness theorem) and so $S\not\vdash{\sf Con}(T)$; which  exactly negates Hilbert's Program: the consistency of ideal mathematics cannot be proved by finitary means.

\subsection{Arithmetical Theories: Minding P's and Q's}
Despite of the fact that usually G\"odel's first incompleteness theorem is proved for Peano's Arithmetic ${\tt PA}$, it holds for very weak fragments of ${\tt PA}$. It is interesting to note that by the techniques of G\"odel's theorem ${\tt PA}$ is proved to be non-finitely axiomatizable (see e.g. \cite{hajekpudlak}). But a magical theory, called Robinson's Arithmetic and denoted by ${\tt Q}$, was introduced in \cite{tarskibook} which has the following properties:

\begin{itemize}
\item ${\tt Q}$ is {finite}: ${\tt Q}={\tt PA}-\{\text{all induction axioms}\}+\forall x\exists y [x=0\vee x=S(y)]$;
\item ${\tt Q}$ is $\Sigma_1$-complete: $\Sigma_1\text{-}{\rm Th}(\mathbb{N})\subseteq{\tt Q}$;
\item ${\tt Q}$ is {\em essentially undecidable} (i.e., {\sc re}-incompletable):
 every {\sc re} and consistent extension of it is (undecidable and) incomplete.
\end{itemize}

\noindent The existence of a finitely axiomatized and undecidable theory immediately implies Church's (and Turing's) theorem on the undecidability of first order logic (giving a negative answer to the Entscheidungsproblem).
\begin{remark}{\rm
The somewhat mysterious symbol ${\tt Q}$ for this theory actually comes from its origin \cite{tarskibook} where (it was first introduced and) Peano's Arithmetic was denoted by ${\tt P}$ (nowadays shown by ${\tt PA}$), and the letter after P is of course Q. There is still another theory (with lots of interesting properties) called (again) Robinson's Arithmetic, denoted by ${\tt R}$, and its R (having nothing to do with Robinson)  just follows Q in the alphabet letters.
}\hfill\ding{71}\end{remark}
 G\"odel-Rosser's (stronger)  Incompleteness Theorem can be stated as ``no consistent and {\sc re} extension of ${\tt Q}$ is $\Pi_1$-decisive'', where a theory $T$ is called $\Gamma$-decisive, for a class $\Gamma$ of formulas, when for any $\phi\in\Gamma$ we have either $T\vdash\phi$ or $T\vdash\neg\phi$. In other words, the G\"odel-Rosser theorem states that for any theory $T$:

\smallskip

${\tt Q}\subseteq T \; \& \; T\in\Sigma_1 \; \& \; {\sf Con}(T) \Longrightarrow T\not\in\Pi_1\text{-Decisive}$.

\smallskip

\noindent In the next (final) subsection we will present a theory $\mathcal{Q}'$ such that for any theory $T$:

\smallskip

$\mathcal{Q}'\subseteq T \; \& \; T\in\Sigma_1 \; \& \; {\sf Con}(T) \Longrightarrow T\not\vdash{\sf Con}(T)$.


\subsection{Second Thoughts on the Second Theorem}
Let us have another look at the derivability conditions from semantic point of view. As was mentioned before, proving them to hold in a particular theory could be difficult, but it is not too difficult to see right away that the followings hold:
\begin{itemize}\itemindent=1em
\item[(i')] if $T\vdash\varphi$, then $\mathcal{N}\models{\sf Pr}_T(\ulcorner\varphi\urcorner)$ (and so ${\tt Q}\vdash{\sf Pr}_T(\ulcorner\varphi\urcorner)$)
    for any {\sc re} theory $T$;
\item[(ii')] $\mathcal{N}\models{\sf Pr}_U(\ulcorner\varphi\rightarrow\psi\urcorner)\rightarrow\big[{\sf Pr}_U(\ulcorner\varphi\urcorner)\rightarrow{\sf Pr}_U(\ulcorner\psi\urcorner)\big]$, for any  {\sc re} theory $U$;
\item[(iii')] $\mathcal{N}\models\sigma\rightarrow{\sf Pr}_U(\ulcorner\sigma\urcorner)$, for any $\Sigma_1$-sentence $\sigma$ and any {\sc re} theory $U\supseteq{\tt Q}$.
\end{itemize}

\noindent Now, define

\bigskip

 $\mathcal{Q}'={\tt Q}\, \cup\{{\sf Pr}_U(\ulcorner\varphi\rightarrow\psi\urcorner)\rightarrow\big[{\sf Pr}_U(\ulcorner\varphi\urcorner)\rightarrow{\sf Pr}_U(\ulcorner\psi\urcorner)\big]\mid  U\textrm{ is an }\textsc{re}\textrm{ theory} \}$

\hspace{3em} $\cup\;\{\sigma\rightarrow{\sf Pr}_{U}(\ulcorner\sigma\urcorner)\mid \sigma \textrm{ is a }\Sigma_1\textrm{-sentence and }U\supseteq{\tt Q}\textrm{ is an }\textsc{re}\textrm{ theory}\}$.

\bigskip

\noindent By what was said above, it is clear that $\mathcal{N}\models\mathcal{Q}'$. And what was promised at the end of the last subsection can be proved rather easily:

\bigskip

\begin{theorem}[G\"odel's Second Incompleteness Theorem]\label{g2} For any consistent and {\sc re} extension $T$ of $\mathcal{Q}'$, we have $T\not\vdash{\sf Con}(T)$.
\end{theorem}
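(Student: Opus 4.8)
The plan is to run the classical Hilbert--Bernays--L\"ob argument for the theory $T$ itself; the whole point is that once $T\supseteq\textswab{Q}'$, the three derivability conditions for ${\sf Pr}_T$ come essentially for free. Since $T$ is \textsc{re} and $T\supseteq{\tt Q}$, the theory $T$ is an admissible value of the parameter $U$ occurring in both axiom-schemes defining $\textswab{Q}'$, so the $U=T$ instances of those schemes are axioms of $\textswab{Q}'$ and hence theorems of $T$; and since $T\supseteq{\tt Q}$ the (G\"odel--Carnap) Diagonal Lemma is available over $T$.

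Concretely, I would first record that the chosen standard ($\Sigma_1$) provability predicate ${\sf Pr}_T$ satisfies over $T$ the following three facts. \emph{(i)} If $T\vdash\varphi$ then $T\vdash{\sf Pr}_T(\ulcorner\varphi\urcorner)$: this one is not an axiom of $\textswab{Q}'$ but a metatheoretic triviality---from $T\vdash\varphi$ one gets an actual $T$-derivation of $\varphi$, so ${\sf Pr}_T(\ulcorner\varphi\urcorner)$ is a \emph{true} $\Sigma_1$-sentence (here one uses that ${\sf Pr}_T$ is $\Sigma_1$ because $T$ is \textsc{re}), whence ${\tt Q}\vdash{\sf Pr}_T(\ulcorner\varphi\urcorner)$ by the $\Sigma_1$-completeness of ${\tt Q}$, and so $T\vdash{\sf Pr}_T(\ulcorner\varphi\urcorner)$. \emph{(ii)} $T\vdash{\sf Pr}_T(\ulcorner\varphi\to\psi\urcorner)\to[{\sf Pr}_T(\ulcorner\varphi\urcorner)\to{\sf Pr}_T(\ulcorner\psi\urcorner)]$: this is precisely the $U=T$ instance of the distribution scheme of $\textswab{Q}'$ (cf.\ (ii$'$)). \emph{(iii)} $T\vdash{\sf Pr}_T(\ulcorner\varphi\urcorner)\to{\sf Pr}_T(\ulcorner{\sf Pr}_T(\ulcorner\varphi\urcorner)\urcorner)$: since $T$ is \textsc{re} the sentence ${\sf Pr}_T(\ulcorner\varphi\urcorner)$ is $\Sigma_1$, so this is the instance with $\sigma={\sf Pr}_T(\ulcorner\varphi\urcorner)$ and $U=T\ (\supseteq{\tt Q})$ of the formalized $\Sigma_1$-completeness scheme of $\textswab{Q}'$ (cf.\ (iii$'$)), again an axiom of $\textswab{Q}'$.

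With (i)--(iii) secured, the rest is the textbook argument. Put ${\sf Con}(T)\equiv\neg{\sf Pr}_T(\ulcorner\bot\urcorner)$, apply the Diagonal Lemma to ${\sf Pr}_T(\ulcorner\xi\urcorner)\to\bot$ to get a sentence $\mathscr{G}$ with $T\vdash\mathscr{G}\longleftrightarrow[{\sf Pr}_T(\ulcorner\mathscr{G}\urcorner)\to\bot]$, assume for contradiction that $T\vdash{\sf Con}(T)$, and then replay verbatim the nine-step derivation $(1)$--$(9)$ displayed earlier in this section (the one starting from the equivalence $(\textswab{d})$ with the contradictory $\bot$), reading each appeal there to ``(i),(ii),(iii)'' as an appeal to the three facts just established. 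Step $(9)$ gives $T\vdash\bot$, contradicting the consistency of $T$; hence $T\not\vdash{\sf Con}(T)$. (Feeding the same three facts into the second displayed derivation yields, just as well, L\"ob's Axiom for $T$ and hence the formalized version $T\vdash{\sf Con}(T)\to\neg{\sf Pr}_T(\ulcorner{\sf Con}(T)\urcorner)$, though this is more than the statement asks for.)

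I do not anticipate a genuine obstacle; the one point that really needs care is the bookkeeping that legitimizes $T$ as an instance of the schemes. One fixes once and for all an \textsc{re} (i.e.\ $\Sigma_1$) presentation of $T$, so that the ``${\sf Pr}_T$'' in the conclusion is exactly the $\Sigma_1$ predicate featuring in the schemes of $\textswab{Q}'$; one checks that this $T$ is \textsc{re} and extends ${\tt Q}$ (so it is an eligible $U$); and one notes that condition (i) is the only item that is not literally an axiom of $\textswab{Q}'$ but rather a consequence of the $\Sigma_1$-completeness of ${\tt Q}$ for the standard ${\sf Pr}_T$. The payoff over the usual route is that one never has to prove the formalized $\Sigma_1$-completeness scheme (iv) inside ${\tt PA}$: one simply throws its (true, hence harmless) instances into ${\tt Q}$, and G\"odel's Second Incompleteness Theorem drops out of the same nine lines.
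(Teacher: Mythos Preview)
Your proposal is correct: once you observe that $T$ is an eligible $U$ in both schemes of $\textswab{Q}'$, the three Hilbert--Bernays--L\"ob conditions hold in $T$ and the classical nine-step derivation goes through verbatim. There is no gap.

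The paper, however, does not replay the nine-step argument. Instead of instantiating the formalized $\Sigma_1$-completeness scheme with $\sigma={\sf Pr}_T(\ulcorner\mathscr{G}\urcorner)$ to obtain condition~(iii), it takes $\mathscr{G}$ to be $\Pi_1$ and instantiates the scheme with $\sigma=\neg\mathscr{G}$, yielding $\textswab{Q}'\vdash\neg{\sf Pr}_T(\ulcorner\neg\mathscr{G}\urcorner)\to\mathscr{G}$. It then applies the distribution scheme once, to the tautology $\neg\mathscr{G}\to(\mathscr{G}\to\bot)$, to get $\textswab{Q}'\vdash{\sf Con}(T)\to\big[\neg{\sf Pr}_T(\ulcorner\mathscr{G}\urcorner)\vee\neg{\sf Pr}_T(\ulcorner\neg\mathscr{G}\urcorner)\big]$; since each disjunct implies $\mathscr{G}$ (one by the fixed-point equation, the other by the line just derived), this gives $\textswab{Q}'\vdash{\sf Con}(T)\to\mathscr{G}$ directly. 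The unprovability of ${\sf Con}(T)$ then follows from $T\not\vdash\mathscr{G}$, which is proved separately by the usual first-incompleteness step. What this buys is a genuinely bitheoretic conclusion---the implication ${\sf Con}(T)\to\mathscr{G}$ lives already in the fixed base theory $\textswab{Q}'$, in the spirit of Detlefsen's Bi-G2 Lemma---whereas your route carries out the whole derivation inside $T$. Your approach has the virtue of being completely mechanical (verify HBL, invoke the textbook), and it does not require that $\mathscr{G}$ be chosen $\Pi_1$; the paper's approach is shorter and better illustrates the bitheoretic point the section is making.
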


\bigskip

Let us postpone the proof for a moment, and pause more on $\mathcal{Q}'$. It would not be much of use if this theory were not {\sc re}. So, let us prove this very important fact before the main theorem:


\begin{theorem}
The theory $\mathcal{Q}'$ is {\sc re}.
\end{theorem}
\begin{proof}
Trivially, ${\tt Q}$, being a finite theory, is {\sc re}; and the class of all {\sc re} theories is {\sc re}, so is the class $\{{\sf Pr}_U(\ulcorner\varphi\rightarrow\psi\urcorner)\rightarrow\big[{\sf Pr}_U(\ulcorner\varphi\urcorner)\rightarrow{\sf Pr}_U(\ulcorner\psi\urcorner)\big]\mid  U\!\in\!\Sigma_1\}$. It remains to show that the class $\{\sigma\rightarrow{\sf Pr}_{U}(\ulcorner\sigma\urcorner)\mid \sigma\!\in\!\Sigma_1\text{-Sentences}\; \& \; {\tt Q}\!\subseteq\!U\!\in\!\Sigma_1\}$ is {\sc re} too. Again, the class of all $\Sigma_1$-sentences is {\sc re} and so is the class of $\Sigma_1$ theories; the finiteness of ${\tt Q}$ implies that the condition ${\tt Q}\!\subseteq\!U$ is equivalent to $U\vdash\bigwedge{\tt Q}$ (where $\bigwedge{\tt Q}$ denotes the conjunction of the finitely many axioms of ${\tt Q}$) which is an {\sc re} property (by a proof-search algorithm). Thus  $\sigma\!\in\!\Sigma_1\text{-Sentences}\; \& \; {\tt Q}\!\subseteq\!U\!\in\!\Sigma_1$ (for given sentence $\sigma$ and set of sentences  $U$) is an {\sc re} condition as well.
\end{proof}

So, we see that again the finiteness of the magical theory ${\tt Q}$ is essential for the recursive enumerability of $\mathcal{Q}'$; if ${\tt Q}$ were not finite,  then the condition ${\tt Q}\subseteq U$ would not be {\sc re} (for given {\sc re} theory $U$). And unfortunately, this is the best we can show for this theory. It would have been another magic to have $\mathcal{Q}'$ finitely axiomatized, or at least have a finitely axiomatized theory containing it. Indeed, there exists a finitely axiomatized theory that contains $\mathcal{Q}'$, and that is ${\tt I\Sigma_1}$, the fragment of ${\tt PA}$ whose induction axioms are restricted to $\Sigma_1$ formulas (see e.g. \cite{hajekpudlak}). But neither the finite axiomatizability of ${\tt I\Sigma_1}$ nor the fact that ${\tt I\Sigma_1}\vdash\mathcal{Q}'$ are easy to show (see the delicate  proofs in e.g. \cite{hajekpudlak}). So, the following easy proof could be difficult, if one wishes to show ${\bf T}\vdash\mathcal{Q}'$ for a particular theory ${\bf T}$.

\bigskip

\bigskip

\begin{proof}{\bf (of Theorem~\ref{g2})} By G\"odel-Carnap Diagonal Lemma there exists an arithmetical sentence $\mathscr{G}$   such that ${\tt Q}\vdash\mathscr{G}\longleftrightarrow[{\sf Pr}_T(\ulcorner\mathscr{G}\urcorner)\rightarrow\bot]$. The sentence $\mathscr{G}$ is equivalent to a $\Pi_1$-sentence and actually could be taken to be $\Pi_1$.  Now, $T\not\vdash\mathscr{G}$, since otherwise (if $T\vdash\mathscr{G}$ then) ${\tt Q}\vdash{\sf Pr}_T(\ulcorner\mathscr{G}\urcorner)$ and so ${\tt Q}\vdash\neg\mathscr{G}$ whence (by $T\supseteq\mathcal{Q}'\supseteq{\tt Q}$) $T\vdash\neg\mathscr{G}$, contradicting the consistency of $T$. Now, as $\neg\mathscr{G}\in\Sigma_1$ we have $\mathcal{Q}'\vdash\neg\mathscr{G}\rightarrow{\sf Pr}_T(\ulcorner\neg\mathscr{G}\urcorner)$ (noting that $T$ is an {\sc re} theory containing ${\tt Q}$). So,

$(\dag)\qquad \mathcal{Q}'\vdash\neg{\sf Pr}_T(\ulcorner\neg\mathscr{G}\urcorner)\rightarrow\mathscr{G}$.

\noindent On the other hand, by classical logic we have $\vdash \neg\mathscr{G}\rightarrow[\mathscr{G}\rightarrow\bot]$, which, by the definition of  $\mathcal{Q}'$, implies that
$\mathcal{Q}'\vdash{\sf Pr}_T(\ulcorner\neg\mathscr{G}\urcorner)\rightarrow\big[{\sf Pr}_T(\ulcorner\mathscr{G}\urcorner)\rightarrow{\sf Pr}_T(\ulcorner\bot\urcorner)\big]$, so

$(\ddag)\qquad \mathcal{Q}'\vdash\neg{\sf Pr}_T(\ulcorner\bot\urcorner) \longrightarrow \neg{\sf Pr}_T(\ulcorner\mathscr{G}\urcorner) \bigvee \neg{\sf Pr}_T(\ulcorner\neg\mathscr{G}\urcorner)$.

\noindent Now, by  $\mathscr{G}$'s property  we have $\mathcal{Q}'\vdash\neg{\sf Pr}_T(\ulcorner\mathscr{G}\urcorner)\rightarrow\mathscr{G}$ and by $(\dag)$ above $\mathcal{Q}'\vdash\neg{\sf Pr}_T(\ulcorner\neg\mathscr{G}\urcorner)\rightarrow\mathscr{G}$. Whence, $(\ddag)$ implies that
$\mathcal{Q}'\vdash\neg{\sf Pr}_T(\ulcorner\bot\urcorner) \longrightarrow \mathscr{G}$, or $\mathcal{Q}'\vdash{\sf Con}(T) \longrightarrow \mathscr{G}$. The desired conclusion $T\not\vdash{\sf Con}(T)$ follows from  $T\not\vdash\mathscr{G}$ proved above.
\end{proof}

\bigskip

\bigskip

\bigskip

\bigskip


\end{document}